\renewcommand{\eprint}[1]{\href{https://arxiv.org/abs/#1}{arXiv:#1}}
\newcommand{\pageafter}[1]{#1~pp.}
\newtheorem{theorem}{Theorem}
\crefname{theorem}{Theorem}{Theorems}
\crefname{corollary}{Corollary}{Corollaries}
\newtheorem{lemma}[theorem]{Lemma}
\crefname{lemma}{Lemma}{Lemmas}
\newtheorem{proposition}[theorem]{Proposition}
\crefname{proposition}{Proposition}{Propositions}
\crefname{conjecture}{Conjecture}{Conjectures}
\newtheorem{question}[theorem]{Question}
\crefname{question}{Question}{Questions}
\theoremstyle{definition}
\newtheorem{definition}[theorem]{Definition}
\crefname{definition}{Definition}{Definitions}
\crefname{remark}{Remark}{Remarks}
\crefname{example}{Example}{Examples}
\crefname{observation}{Observation}{Observations}
\crefname{claim}{Claim}{Claims}
\crefname{assumption}{Assumption}{Assumptions}
\newcommand{\cE}{\mathcal{E}}
\newcommand{\cF}{\mathcal{F}}
\title{\scshape The maximal running time of hypergraph bootstrap percolation}
\author[1]{Ivailo Hartarsky}
\author[2]{Lyuben Lichev}
\affil[1]{TU Wien, Faculty of Mathematics and Geoinformation, Institute of Statistics and Mathematical Methods in Economics, Research Unit of Mathematical Stochastics, Wiedner Hauptstra\ss e 8-10, A-1040 Vienna, Austria, \texttt{ivailo.hartarsky@tuwien.ac.at}}
\affil[2]{Jean Monnet University and Institut Camille Jordan, Saint-Etienne and Lyon, France, \texttt{lyuben.lichev@univ-st-etienne.fr}}
\begin{document}

\maketitle

\begin{abstract}
We show that for every $r\ge 3$, the maximal running time of the $K^{r}_{r+1}$-bootstrap percolation in the complete $r$-uniform hypergraph on $n$ vertices $K_n^r$ is $\Theta(n^r)$. This answers a recent question of Noel and Ranganathan in the affirmative, and disproves a conjecture of theirs. Moreover, we show that the prefactor is of the form $r^{-r} \mathrm{e}^{O(r)}$ as $r\to\infty$.
\end{abstract}

\noindent\textbf{MSC2020:} 05C35; 05C65; 05D99
\\
\textbf{Keywords:} Bootstrap percolation, maximal running time, complete hypergraph

\section{Introduction}
\subsection{Background}
Bootstrap percolation is a widely studied model for spreading infection or information on a graph. Given an initial set of infections, this process iteratively adds infections if a certain local pattern of infections is already present. While the original statistical physics perspective suggests selecting initial infections at random, interesting computational and extremal combinatorial questions arise when considering deterministic instances. The most classical extremal problems are determining the minimal number of infections necessary for infecting the entire graph and the maximal time until stationarity or complete infection.

These maximal times have been studied in several settings, including grids \cite{Benevides15} and hypercubes \cites{Przykucki12,Hartarsky18}. Bootstrap percolation in which one infects the edges of a graph when they complete a copy of a fixed graph is referred to graph bootstrap percolation or weak saturation. In that setting, the maximal times were studied in \cites{BaloghNan,Bollobas17a,Matzke15}. Finally, our setting of interest is the more general hypergraph bootstrap percolation, where maximal times were recently investigated by Noel and Ranganathan \cite{Noel23}.

\subsection{Model and result}
For a positive integer $r\ge 2$ and an $r$-uniform hypergraph $H$ (or $r$-graph for short), we identify $H$ with its edge set and denote by $V(H)$ the set of vertices of $H$. In this paper, we pay particular attention to the complete $r$-graph on $n$ vertices denoted $K^r_n$, especially the $r$-graph $K^r_{r+1}$, which we denote $F_r$ for short.

For a positive integer $r\ge 2$ and an $r$-graph $F$, the \emph{$F$-bootstrap process in $K^r_n$} is a monotone cellular automaton described as follows. At the beginning, a subgraph $G_0$ of $K^r_n$ is infected. At every step $i\ge 1$, we define
\begin{equation}\label{eq:G_i}
    G_i = G_{i-1}\cup \{e\in K^r_n\colon \text{there is a copy }F'\subset K^r_n\text{ of }F\text{ such that }e = F'\setminus G_{i-1}\}.
\end{equation}
The \emph{running time} until stationarity (that is, the first step $i$ at which $G_i = G_{i+1}$) of the $F$-bootstrap percolation with initially infected set $G_0$ is denoted by $T(F, G_0, n)$. In~\cite{Noel23} it was proved that 
\[\max_{G_0\subset K_n^r}T(K_{m}^r,G_0,n)=\Theta(n^r)\] 
for all fixed $m\ge r+2\ge 5$.
In this paper, we are interested in the 
\emph{maximum running time} of the $F_r$-bootstrap percolation $T_r(n)$ defined as
\[
T_r(n) = \max_{G_0\subset K^r_n} T(F_r, G_0, n).
\]
In this case, \cite{Noel23} only established that for all $r\ge 3$, $T_r(n)\ge c_r n^{r-1}$ for some $c_r>0$. Our work closes the gap between this lower bound and the trivial upper bound $T_r(n)\le \binom{n}{r}$. More precisely, we answer Question~5.2 from~\cite{Noel23} in a strong sense, and disprove Conjecture~5.1 from the same paper suggesting that $T_3(n) = O(n^2)$. 

\begin{question}[Question 5.2 from~\cite{Noel23}]
Does there exist an integer $r_0$ such that, if $r\ge r_0$, then $T_r(n) = \Theta(n^r)$?
\end{question}

Our main result is the following lower bound.

\begin{theorem}\label{thm main}
For every $r\ge 3$ and $n\ge 2r^2$, we have
\[\frac{n^r}{2^{r+3}r^r}\le T_r(n)\le \frac{n^re^r}{r^r}.\]
\end{theorem}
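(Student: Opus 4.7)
\emph{Upper bound.} Since $G_i$ grows strictly until stabilization, $T_r(n)\le \binom{n}{r}\le n^r/r!$. The elementary bound $r!\ge (r/e)^r$ (obtained from $\sum_{k=1}^{r}\log k\ge\int_{1}^{r}\log x\,dx = r\log r - r + 1$) then yields $T_r(n)\le n^re^r/r^r$.

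\emph{Lower bound: construction.} My plan is to arrange the infection to propagate along a Hamilton path of $[m]^r$, infecting one transversal edge per time step. Partition $V(K^r_n)=V_1\sqcup\cdots\sqcup V_r$ with $|V_i|=m:=\lfloor n/r\rfloor$ (so that $m\ge n/(2r)$ when $n\ge 2r^2$), index $V_i=\{v_{i,0},\ldots,v_{i,m-1}\}$, and identify each \emph{transversal} edge $\{v_{1,a_1},\ldots,v_{r,a_r}\}$ with the lattice point $(a_1,\ldots,a_r)\in[m]^r$. Fix a boustrophedon Hamilton path $p_0,p_1,\ldots,p_{m^r-1}$ on $[m]^r$ that sweeps each line $(p_1,\ldots,p_{r-1},\cdot)$ linearly in coordinate $r$, turns in coordinate $r-1$ only at the extremities of those sweeps, turns in coordinate $r-2$ only at the extremities of the $(r-1)$-level sweeps, and so on hierarchically. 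Let $G_0$ consist of the transversal edge $p_0$ together with, for every snake step $p_t\to p_{t+1}$ changing some coordinate $i_t$ by $\pm 1$, the $r-1$ non-transversal facets of the $(r+1)$-set $p_t\cup p_{t+1}$ other than $p_t$ and $p_{t+1}$. Each such facet contains the two vertices in $V_{i_t}$ at the positions $(p_t)_{i_t}$ and $(p_{t+1})_{i_t}$, one vertex in each $V_k$ with $k\notin\{i_t,j\}$, and no vertex of $V_j$, for some $j\ne i_t$.

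\emph{Lower bound: induction and main obstacle.} I prove by induction on $t$ that the transversal part of $G_t$ equals $\{p_0,\ldots,p_t\}$. The forward direction is immediate: the snake step from $p_t$ to $p_{t+1}$ together with its pre-infected facets witnesses $p_{t+1}\in G_{t+1}$. The converse is the technical heart of the proof: if some transversal $q\in G_{t+1}\setminus G_t$ existed, then there would be an $(r+1)$-set $q\cup\{u\}$ whose other $r$ facets lie in $G_t$, of which exactly one is transversal (say $p=p_s$, $s\le t$, differing from $q$ in a single coordinate $i$) and the other $r-1$ are non-transversal elements of $G_0$. The difficulty is that each pre-infected facet is independent of one coordinate (the one of the removed vertex), so a single such facet enables a whole family of potential direction-$i$ propagations; indeed, a na\"ive pre-infection of \emph{all} non-transversal edges would allow parallel propagation within each layer and collapse the running time to $O(n)$. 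The boustrophedon is chosen precisely so that direction-$i$ snake steps occur only at specific configurations of the higher coordinates $(i+1,\ldots,r)$. Intersecting the $r-1$ constraints then forces $(p_s,q)$ to be snake-consecutive, giving $q=p_{s\pm 1}$ and hence, since $q\notin G_t$, $q=p_{t+1}$. Thus the running time is at least $m^r-1$; combined with $m\ge n/(2r)$ and $n^r\ge(2r^2)^r\ge (8/7)\cdot 2^rr^r$, a direct computation yields $T_r(n)\ge n^r/(2^{r+3}r^r)$.
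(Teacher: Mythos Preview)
Your upper bound argument is correct and matches the paper's.

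The lower bound construction, however, has a genuine gap. You assert that when a transversal $q$ is newly infected via the $(r+1)$-set $q\cup\{u\}$, ``the other $r-1$ are non-transversal elements of $G_0$.'' This silently assumes that no non-transversal edge is ever infected during the process, but that is false. Once two transversals $p_s,p_{s'}$ that are lattice-adjacent (but not snake-adjacent) are both present, the $(r+1)$-set $p_s\cup p_{s'}$ may already have $r$ of its facets infected, forcing the remaining non-transversal facet into $G_t$; this new non-transversal then enables shortcuts. Concretely, take $r=3$, $m=3$, and the standard boustrophedon starting $(0,0,0),(0,0,1),(0,0,2),(0,1,2),(0,1,1),(0,1,0),\dots$ After step~5 both $(0,0,0)$ and $(0,1,0)$ are infected; in the $4$-set $\{v_{1,0},v_{2,0},v_{2,1},v_{3,0}\}$ the facet $\{v_{1,0},v_{2,0},v_{2,1}\}$ lies in $G_0$ (from the snake step $(0,0,2)\to(0,1,2)$), so at step~6 the non-transversal $\{v_{2,0},v_{2,1},v_{3,0}\}$ is created. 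Later, when $(2,0,0)$ is reached, this newly created facet together with $\{v_{1,2},v_{2,0},v_{2,1}\}\in G_0$ lets you infect $(2,1,0)$ \emph{in the same step} as the intended $(2,0,1)$, after which the process floods the remaining layer in $O(m)$ steps rather than $m^2$. Thus the induction hypothesis on the transversal part fails and the running time is not $m^r-1$.

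The underlying structural reason is that direction-$r$ snake steps occur at \emph{every} position, so the non-transversal facets they generate place essentially no constraint on propagation; and for the other directions, facets that omit a coordinate $j\neq i$ are shared among many $(r+1)$-sets, so as soon as the snake revisits a fibre it starts manufacturing the very ``missing'' facets that your intersecting-constraints argument needs to exclude. Controlling these secondary infections is exactly the difficulty the paper's construction is designed to avoid: rather than a product grid with a Hamilton path, it uses an intricate $r=3$ base (two paths plus a hub vertex, with carefully chosen triples so that the stationary hypergraph $H_\infty$ is explicitly verified) together with a reversibility property, and then a gluing/induction scheme that lifts the construction from $r$ to $r+1$ by adding a single vertex to all edges and chaining copies. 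The reversibility and the explicit stationarity checks are precisely what let the paper certify that no unintended edge---transversal or otherwise---is ever produced.
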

Let us note that, while our result shows that the constant prefactor is $r^{-r} \mathrm{e}^{O(r)}$ as $r\to \infty$, our construction has not been optimised for this purpose, but rather for the clarity of the exposition. We also observe that the condition $n\ge 2r^2$ is imposed mostly for technical reasons and is only used in~\eqref{eq:ineq}.

\subsection{Outline of the proof}
\Cref{thm main} is proved as follows. We start with a simple construction showing that $T_3(2(4k-3)+1)\ge 8k^2-12k+4$ for any $k\ge 1$ (see \cref{sec:base}), which reproves the corresponding result of \cite{Noel23}. The construction (see \cref{fig:bipartite}) consists of two vertex-disjoint paths, each containing $4k-3$ vertices, and an additional vertex $v_1^3$. The edges in the initial graph are: $v_1^3$ together with any 2-edge in some of the paths, $v_1^3$ together with the initial vertices of the two paths, and some $3$-edges that do not contain $v_1^3$. This is done so that, firstly, only one edge is infected per time step, secondly, all edges which become infected contain $v_1^3$ and one vertex in each path, and moreover, the process is reversible in the sense that, if we replace the edge joining the beginnings of the two paths by the last edge infected by the process, the same edges become infected but in reverse order.

Based on this construction, we prove \cref{thm main} for $r=3$ by gluing $2k-1$ instances of it, as shown in \cref{fig:3level} (see \cref{prop:induction}). Namely, using an auxiliary vertex $v_2^3$, we propagate the infection from the end of the process for the vertex $v_1^3$ to the end of the same process for a new vertex $v_3^3$ instead of $v_1^3$, which will then run the other way around. Repeating this procedure $2k-2$ times, we obtain a cubic running time. The key feature of this strategy is that no edges that do not contain $v_1^3$ are created in the first construction, so we do not alter the configuration induced by the first $2(4k-3)$ vertices. Moreover, the same gluing technique allows us to extend the construction from $r$ to $r+1$ by adding a vertex to all edges, adding all edges that do not contain this vertex, and then gluing copies of this construction that differ only in this special vertex while keeping the remaining ones unchanged.

\section{Proof of Theorem~\ref{thm main}}
To start with, we fix a positive integer $k = k(n)$ to be chosen appropriately later. Then, for all integers $i\ge 1$, we define the vertex sets
\begin{align*}
A_i &{}= \{v_1^i, v_2^i, \dots, v_{4k-3}^i\},& A^*_i &{}= \bigcup_{j=1}^{i} A_j.
\end{align*}
In particular, the sets $(A_i)_{i\ge 1}$ are disjoint while the sets $(A_i^*)_{i\ge 1}$ form an increasing sequence with respect to inclusion.
Next, we consider a particular case of $F_r$-bootstrap percolation where a sequence of edges can be infected both in the given and in a reverse order under a suitable initial condition. 

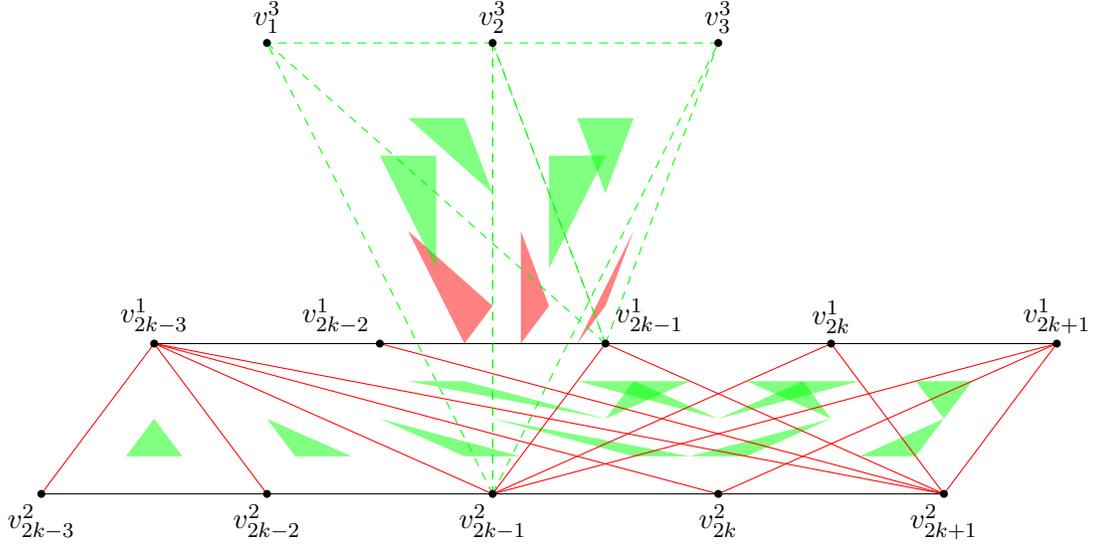
\begin{figure}
    \centering
    \begin{tikzpicture}[line cap=round,line join=round,>=triangle 45,x=1.5cm,y=1.0cm]
\fill[color=red,fill=red,fill opacity=0.5] (-0.75,3.5) -- (-0.25,2) -- (0,2.5) -- cycle;
\fill[color=red,fill=red,fill opacity=0.5] (0.25,2) -- (0.5,2.5) -- (0.25,3.5) -- cycle;
\fill[color=red,fill=red,fill opacity=0.5] (1,2.5) -- (0.75,2) -- (1.25,3.5) -- cycle;
\fill[color=green,fill=green,fill opacity=0.5] (0.75,5) -- (1,4) -- (1.25,5) -- cycle;
\fill[color=green,fill=green,fill opacity=0.5] (0.5,3) -- (0.5,4.5) -- (1,4.5) -- cycle;
\fill[color=green,fill=green,fill opacity=0.5] (-1,4.5) -- (-0.5,3) -- (-0.5,4.5) -- cycle;
\fill[color=green,fill=green,fill opacity=0.5] (-0.75,5) -- (-0.25,5) -- (0,4) -- cycle;
\fill[color=green,fill=green,fill opacity=0.5] (-3.25,0.5) -- (-2.75,0.5) -- (-3,1) -- cycle;
\fill[color=green,fill=green,fill opacity=0.5] (-2,1) -- (-1.75,0.5) -- (-1.25,0.5) -- cycle;
\fill[color=green,fill=green,fill opacity=0.5] (-0.25,0.5) -- (0.25,0.5) -- (-1,1) -- cycle;
\fill[color=green,fill=green,fill opacity=0.5] (1.25,0.5) -- (1.75,0.5) -- (0,1) -- cycle;
\fill[color=green,fill=green,fill opacity=0.5] (-0.75,1.5) -- (-0.25,1.5) -- (1,1) -- cycle;
\fill[color=green,fill=green,fill opacity=0.5] (0.75,1.5) -- (1.25,1.5) -- (2,1) -- cycle;
\fill[color=green,fill=green,fill opacity=0.5] (2.25,1.5) -- (3,1) -- (2.75,1.5) -- cycle;
\fill[color=green,fill=green,fill opacity=0.5] (4,1) -- (3.75,1.5) -- (4.25,1.5) -- cycle;
\fill[color=green,fill=green,fill opacity=0.5] (3.75,0.5) -- (4,1) -- (3.25,0.5) -- cycle;
\fill[color=green,fill=green,fill opacity=0.5] (3,1) -- (2.25,0.5) -- (1.75,0.5) -- cycle;
\fill[color=green,fill=green,fill opacity=0.5] (2,1) -- (3.25,1.5) -- (2.75,1.5) -- cycle;
\fill[color=green,fill=green,fill opacity=0.5] (1,1) -- (1.75,1.5) -- (1.25,1.5) -- cycle;
\draw[color=red] (0,0)-- (1,2);
\draw (-3,2)-- (5,2);
\draw (-4,0)-- (4,0);
\draw[color=red] (-4,0)-- (-3,2);
\draw[color=red] (5,2)-- (4,0);
\draw[color=red] (-3,2)-- (-2,0);
\draw[color=red] (-3,2)-- (0,0);
\draw[color=red] (-3,2)-- (2,0);
\draw[color=red] (-3,2)-- (4,0);
\draw[color=red] (4,0)-- (-1,2);
\draw[color=red] (4,0)-- (1,2);
\draw[color=red] (4,0)-- (3,2);
\draw[color=red] (2,0)-- (5,2);
\draw[color=red] (5,2)-- (0,0);
\draw[color=red] (0,0)-- (3,2);
\draw [dashed,color=green] (0,6)-- (0,0);
\draw [dashed,color=green] (0,0)-- (-2,6);
\draw [dashed,color=green] (-2,6)-- (1,2);
\draw [dashed,color=green] (1,2)-- (0,6);
\draw [dashed,color=green] (0,6)-- (-2,6);
\draw [dashed,color=green] (0,6)-- (1,2);
\draw [dashed,color=green] (1,2)-- (2,6);
\draw [dashed,color=green] (2,6)-- (0,6);
\draw [dashed,color=green] (0,6)-- (0,0);
\draw [dashed,color=green] (0,0)-- (2,6);
\fill (-3,2) circle (1.5pt) node[above]{$v_{2k-3}^1$};
\fill (-1,2) circle (1.5pt) node[above left]{$v_{2k-2}^1$};
\fill (1,2) circle (1.5pt) node[above right]{$v_{2k-1}^1$};
\fill (3,2) circle (1.5pt) node[above]{$v_{2k}^1$};
\fill (5,2) circle (1.5pt) node[above]{$v_{2k+1}^1$};
\fill (-4,0) circle (1.5pt) node[below]{$v_{2k-3}^2$};
\fill (-2,0) circle (1.5pt) node[below]{$v_{2k-2}^2$};
\fill (0,0) circle (1.5pt) node[below]{$v_{2k-1}^2$};
\fill (2,0) circle (1.5pt) node[below]{$v_{2k}^2$};
\fill (4,0) circle (1.5pt) node[below]{$v_{2k+1}^2$};
\fill (-2,6) circle (1.5pt) node[above]{$v_{1}^{3}$};
\fill (0,6) circle (1.5pt) node[above]{$v_{2}^{3}$};
\fill (2,6) circle (1.5pt) node[above]{$v_{3}^{3}$};
\end{tikzpicture}
    \caption{Illustration of the transition between the last step of stage $k-1$ for the vertex $v_1^3$ and the first step of the first stage for the vertex $v_3^3$. On this intermediate step, only the edge $v_{2k-1}^1 v_{2k-1}^2 v_2^3$ is infected. Note that 3-edges are represented by smaller homothetic copies of the corresponding triangles.}
    \label{fig:3level}
\end{figure}

\begin{definition}
\label{def:sequential}
Let $r\ge 3$ be an integer and $H$ be an $r$-graph with vertex set $V(H)\subset A_r^*$ and edge set $E(H)$. Given a sequence $(e_i)_{i=0}^T$ of edges in $E(H)$, we say that $H$ is \emph{$(e_i)_{i=0}^T$-sequential} if the $F_r$-bootstrap percolation process in the complete $r$-graph on $V(H)$ with initially infected $r$-graph:    
\begin{enumerate}[(i)]
    \item \label{property:i}$H$ runs for $T$ steps, infecting only the $r$-edge $e_i$ at step $i\in[1,T]$;
    \item \label{property:ii}$H\setminus\{e_0\}$ is stationary (does not infect any $r$-edge);
    \item \label{property:iii}$(H_1\cup \{e_{T}\})\setminus \{e_0\}$ infects only the $r$-edge $e_{T-i}$ at step $i\in[1,T]$.
\end{enumerate}
\end{definition}

\noindent
\cref{def:sequential} is central in the key construction presented in the next proposition.

\begin{proposition}\label{prop:induction}
Fix integers $r\ge 3$ and $k\ge 2$. Let $T_1\ge 2$ be an integer and $H_1$ be an $r$-graph with vertices $A^*_{r-1}\cup \{v^r_1\}$. Suppose that $H_1$ is $(e_i)_{i=0}^{T_1}$-sequential with $v_1^r\in\bigcap_{i=0}^{T_1}e_i$. For all $j\in [1,2k-1]$, let $H_{2j-1}$ be a copy of $H_1\setminus\{e_0\}$ obtained by replacing $v^r_1$ by $v_{2j-1}^r$. Moreover, define $e_i^- = e_i\setminus \{v_1^r\}$, and for all $j\in [1,k-1]$, let 
\begin{align*}
H_{4j-2}={}&\{e\subset \{v_{4j-3}^r, v^r_{4j-2}, v^r_{4j-1}\}\cup e_{T_1}^-\colon |e|=r, \{v_{4j-3}^r, v^r_{4j-1}\}\not\subset e, e_{T_1}^-\not\subset e\}\\
H_{4j}={}&\{e\subset \{v_{4j-1}^r, v^r_{4j}, v^r_{4j+1}\}\cup e_0^-\colon |e|=r, \{v_{4j-1}^r, v^r_{4j+1}\}\not\subset e, e_0^-\not\subset e\}
\end{align*}
(see \cref{fig:3level} for an illustration of $H_2$ with $r=3$, one choice of $H_1$ and $e_{T_1}^- = \{v_{2k-1}^1, v_{2k-1
}^2\}$, so that $H_2=\{v_{2k-1}^1v_1^3v_2^3,v_{2k-1}^2v_1^3v_2^3,v_{2k-1}^1v_2^3v_3^3,v_{2k-1}^2v_2^3v_3^3\}$). Let $H=\bigcup_{j=1}^{4k-3} H_j$. Then, for $T=(2k-1)T_1+4(k-1)$, there exists a sequence of $r$-edges $(e_i)_{i=0}^T$ extending $(e_i)_{i=0}^{T_1}$ such that $H$ is $(e_i)_{i=0}^T$-sequential. 
\end{proposition}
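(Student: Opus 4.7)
The plan is to define the extended sequence $(e_i)_{i=0}^T$ explicitly and then verify the three conditions of \cref{def:sequential} by tracking the $F_r$-bootstrap process on $K^r_{V(H)}$ one step at a time. Partition the $T$ steps into $2k-1$ ``stages'' of length $T_1$, indexed by $j\in [1,2k-1]$, interleaved with $2(k-1)$ ``bridges'' of length $2$. The $T_1$ edges infected in stage $j$ should be the translate of $(e_i)_{i=1}^{T_1}$ under the relabelling $v_1^r\mapsto v_{2j-1}^r$, read in forward order for odd $j$ and in reverse order for even $j$. In the bridge $H_{4j-2}$ the two successive edges are $\{v_{4j-2}^r\}\cup e_{T_1}^-$ followed by $\{v_{4j-1}^r\}\cup e_{T_1}^-$, and in the bridge $H_{4j}$ they are $\{v_{4j}^r\}\cup e_0^-$ followed by $\{v_{4j+1}^r\}\cup e_0^-$.

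I would then proceed through the phases in order to verify condition (i). Each odd stage reduces to the sequentiality of $H_1$: after the preceding bridge (or after the initial presence of $e_0$ for $j=1$) deposits the appropriate translate of $e_0$ (for forward stages) or of $e_{T_1}$ (for reverse ones), we are in the hypothesis of condition (i) or (iii) of \cref{def:sequential}, respectively, up to the relabelling $v_1^r\mapsto v_{2j-1}^r$. Each bridge phase is checked by a direct case analysis on its $(r+2)$-vertex set: identifying each $r$-edge with its $2$-element complement, the configuration at the start of $H_{4j-2}$ is the complete bipartite graph between $\{v_{4j-3}^r,v_{4j-1}^r\}$ and $e_{T_1}^-$ augmented by the pair $\{v_{4j-2}^r,v_{4j-1}^r\}$, dual to the trigger edge $\{v_{4j-3}^r\}\cup e_{T_1}^-$ just produced by stage $2j-1$. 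A vertex-degree check shows that the unique ``star'' missing a single edge is the one centred at $v_{4j-1}^r$; its completion produces $\{v_{4j-2}^r\}\cup e_{T_1}^-$, and the next step completes the star centred at $v_{4j-3}^r$, producing $\{v_{4j-1}^r\}\cup e_{T_1}^-$, after which the bridge is stationary. The bridge $H_{4j}$ is handled symmetrically, with $e_{T_1}^-$ replaced by $e_0^-$ and the roles of ``left'' and ``right'' endpoints swapped.

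The main obstacle is to show that no ``cross-piece'' $F_r$ ever becomes completable, so that the piecewise dynamics genuinely match the process on $K^r_{V(H)}$. The key observation is that every edge of $H$, and every edge produced by the intended process, contains either at most one special vertex $v_i^r$ or exactly two special vertices belonging to a common bridge (hence consecutively indexed within one of $\{v_{4j-3}^r,v_{4j-2}^r,v_{4j-1}^r\}$ or $\{v_{4j-1}^r,v_{4j}^r,v_{4j+1}^r\}$). Any $F_r$ whose vertex set contains two special vertices not sharing a bridge would use $r-1\ge 2$ of its edges to contain both of them, none of which is ever present; the same count rules out $F_r$'s with three or more special vertices. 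A finer case distinction according to the intersection of the $F_r$'s vertex set with $e_{T_1}^-$ (or $e_0^-$) then dispatches the remaining possibilities, namely $F_r$'s mixing a bridge's vertices with a non-adjacent stage's vertices.

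Finally, condition (ii) follows from the same localisation: $H\setminus\{e_0\}$ is stationary because $H_1\setminus\{e_0\}$ is stationary in its own vertex set by hypothesis, and no bridge can activate without first receiving its trigger edge from the preceding stage. Condition (iii) follows from the manifest left--right symmetry of each bridge (exchanging the two extreme special vertices $v_{4j-3}^r$ and $v_{4j-1}^r$, or $v_{4j-1}^r$ and $v_{4j+1}^r$) together with the reversibility of each stage provided by condition (iii) of \cref{def:sequential}: inserting $e_T$ and deleting $e_0$ runs the entire process in reverse, one step at a time.
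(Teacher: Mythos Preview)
Your plan is correct and runs parallel to the paper's argument, but the organisation differs in a way worth noting. The paper proceeds \emph{statically}: it first shows that $H\setminus\{e_0\}$ is stationary, then introduces the terminal hypergraph $H_\infty$ (your $H$ together with all the intended edges), proves $H_\infty$ is stationary by the same two-case analysis on $|f\cap A_r|$, and finally reads off the chain structure of $F_r$'s in $H_\infty$ to conclude that the process walks through your list~\eqref{eq:list} one edge at a time in either direction. Your approach is \emph{dynamic}: you follow the process phase by phase, reducing each stage to the sequentiality of $H_1$ and each bridge to a local computation, with the cross-piece argument guaranteeing that distinct phases do not interfere. Both routes rest on the same observation (no edge ever contains two non-consecutive vertices of $A_r$), and your ``$r-1\ge 2$ missing edges'' count is exactly the content of the paper's first bullet in each case analysis. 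The static version has the advantage that the cross-piece analysis is done once for $H_\infty$ rather than re-invoked at every phase boundary; your version has the advantage that the bridge step becomes transparent via the $2$-complement duality on the $(r+2)$-vertex set, which the paper does not exploit. Your ``finer case distinction'' for $F_r$'s with a single even vertex of $A_r$, or with two consecutive vertices of $A_r$ but straddling a bridge and a stage, is precisely what the paper's second bullet in each stationarity proof handles; you should expect to reproduce that check rather than something new.
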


We prove \cref{thm main} by induction on $r$. The induction step is based on \cref{prop:induction}, leading to the following statement. 
\begin{proposition}
\label{prop:induction:hypothesis}
Let $k\ge 2$. There exist $r$-edges $(e_i)_{i=0}^{T_1}$ all containing $v_1^r$ and a $(e_i)_{i=0}^{T_1}$-squential $r$-graph $H_1$ with $V(H_1)=A_{r-1}^*\cup\{v_1^r\}$ and $T_1=(2k-1)^{r-3}(8k^2-12k+6)-2$.
\end{proposition}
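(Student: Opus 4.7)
I would prove \cref{prop:induction:hypothesis} by induction on $r\ge 3$. The base case $r=3$ is furnished by the explicit construction of \cref{sec:base}: it yields a $3$-graph on $A_2^*\cup\{v_1^3\}$ that is $(e_i)_{i=0}^{T_1}$-sequential with $T_1 = 8k^2-12k+4 = (2k-1)^0(8k^2-12k+6)-2$ and $v_1^3\in\bigcap_{i=0}^{T_1}e_i$, as is evident from the description of the construction recalled in the outline (all initial edges joining the two paths go through $v_1^3$, as do all infected edges, including $e_0$).

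For the inductive step from $r$ to $r+1$, I would feed $H_1^{(r)}$ into \cref{prop:induction}; the prerequisite $v_1^r\in\bigcap e_i$ is exactly part of the induction hypothesis. This yields an $r$-graph $H^{(r)}$ on $A_r^*$ that is sequential for
\[(2k-1)T_1^{(r)}+4(k-1) = (2k-1)^{r-2}(8k^2-12k+6)-2\]
steps, matching the running time the statement requires at uniformity $r+1$. To raise the uniformity by one, I introduce a fresh vertex $v_1^{r+1}$ and define the $(r+1)$-graph
\[H_1^{(r+1)} := \bigl\{f\cup\{v_1^{r+1}\}:f\in H^{(r)}\bigr\}\cup\binom{A_r^*}{r+1}\]
on $A_r^*\cup\{v_1^{r+1}\}$, so that every newly infected edge is of the form $e_i\cup\{v_1^{r+1}\}$ and contains $v_1^{r+1}$ by construction.

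The heart of the argument---and the expected main obstacle---is to show that $F_{r+1}$-bootstrap percolation on $H_1^{(r+1)}$ mirrors $F_r$-bootstrap percolation on $H^{(r)}$ step by step. I would analyse an arbitrary witnessing $(r+2)$-set $T$ of a potential new infection. If $v_1^{r+1}\notin T$, then $T\subset A_r^*$ and every $(r+1)$-subset of $T$ already lies in $\binom{A_r^*}{r+1}$, blocking any infection inside $T$. Otherwise $T=\{v_1^{r+1}\}\cup S$ with $S\in\binom{A_r^*}{r+1}$: the edge $S$ is initially infected, and the remaining $r+1$ edges $\{v_1^{r+1}\}\cup (S\setminus\{x\})$ for $x\in S$ are present in the current state of $H_1^{(r+1)}$ precisely when $S\setminus\{x\}$ is in the corresponding state of $H^{(r)}$. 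This identifies such $T$ with copies of $F_r$ in $S$ witnessing an $F_r$-bootstrap infection.

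This bijective correspondence gives property (i) of \cref{def:sequential} for $H_1^{(r+1)}$ directly. Property (ii) reduces to the stationarity of $H^{(r)}\setminus\{e_0\}$ provided by \cref{prop:induction}, since the lift of $H^{(r)}\setminus\{e_0\}$ is exactly $H_1^{(r+1)}\setminus\{e_0\cup\{v_1^{r+1}\}\}$. Property (iii) is the subtlest and requires checking that the lift of $(H^{(r)}\cup\{e_{T_1^{(r+1)}}^{(r)}\})\setminus\{e_0\}$ equals $(H_1^{(r+1)}\cup\{e_{T_1^{(r+1)}}^{(r)}\cup\{v_1^{r+1}\}\})\setminus\{e_0\cup\{v_1^{r+1}\}\}$, after which reversibility transfers from $H^{(r)}$. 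This completes the induction and yields the required $H_1^{(r+1)}$.
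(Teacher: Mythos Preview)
Your proposal is correct and follows essentially the same approach as the paper: induction on $r$ with the base case from \cref{sec:base}, feeding the induction hypothesis into \cref{prop:induction} to obtain the $r$-graph $H^{(r)}$ on $A_r^*$, and then lifting to uniformity $r+1$ by adjoining $v_1^{r+1}$ to every edge and adding the complete $(r+1)$-graph on $A_r^*$. The paper packages the step-by-step correspondence slightly differently---it states once that for any $(r+1)$-graph $H'\supset K$ the projected process satisfies $H''_i=\{e\setminus\{v_1^{r+1}\}:e\in H'_i\setminus K\}$ and then applies this to the three initial conditions in \cref{def:sequential}---whereas you analyse the witnessing $(r{+}2)$-sets directly, but the content is the same.
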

The base case $r=3$ of \cref{prop:induction:hypothesis} is left to \cref{sec:base}.

\begin{proof}[Proof of \cref{prop:induction:hypothesis} assuming the case $r=3$]
We argue by induction on $r$, the base being established in \cref{sec:base}. Assume the induction hypothesis to be true for some $r\ge 3$. Then, \cref{prop:induction} provides us with an $(e_i)_{i=0}^T$-sequential $r$-graph denoted $H^r$ with $V(H^r)=A_r^*$ and 
\[T=(2k-1)T_1+4(k-1)=(2k-1)^{r-2}(8k^2-12k+6)-2.\]
Let $K$ be the complete $(r+1)$-graph with $V(K)=A_r^*$. Define the $(r+1)$-graph 
\[H^{r+1}_1=\{e\cup\{v^{r+1}_1\}\colon e\in H^r\}\cup K.\]
We claim that $H_1^{r+1}$ is $(e_i\cup\{v_1^{r+1}\})_{i=0}^T$-sequential. To see this, observe that $H_1^{r+1}\setminus\{e_0\cup\{v_1^{r+1}\}\}\supset K$ and proceed as follows. 

Fix any $(r+1)$-graph $H'\supset K$ with $V(H')=A_{r}^*\cup\{v_1^{r+1}\}$. Let $H'_i$ be the set of $(r+1)$-edges infected in the $F_{r+1}$-bootstrap percolation process on the complete graph on $V(H')$ and initially infected $(r+1)$-graph $H'$ at step $i$. Similarly, let $H''_i$ be the set of $r$-edges infected in the $F_r$-bootstrap percolation process on the complete $r$-graph with vertex set $A_r^*$ with initially infected $r$-graph $H''=\{e\setminus\{v_1^{r+1}\}\colon v_1^{r+1}\in e\in H'\}$ at step $i$. Then, by induction on $i$, it follows from the definition of these processes that
\[H''_i=\{e\setminus \{v_1^{r+1}\}\colon e\in H'_i\setminus K\}.\]
Applying this to $H'\in\{H_1^{r+1},H_1^{r+1}\setminus\{e_0\cup\{v_1^{r+1}\}\},(H_1^{r+1}\cup\{e_T\cup\{v_1^{r+1}\}\})\setminus\{e_0\cup\{v_1^{r+1}\}\}\}$, it follows from \cref{def:sequential} that $H^{r+1}_1$ is $(e_i\cup \{v^{r+1}_1\})_{i=0}^T$-sequential, as desired.
\end{proof}
We are now ready to conclude the proof of \cref{thm main}.
\begin{proof}[Proof of \cref{thm main}]
Fix $r\ge 3$ and set $k=\lfloor (n/r+3)/4\rfloor\ge n/(4r)-1/4$. For the lower bound, we apply \cref{prop:induction:hypothesis} and then \cref{prop:induction} to obtain an $r$-graph $H^r$ with $|A_r^*| = (r-1)(4k-3)+(4k-3) \le n$ vertices such that the $F_r$-bootstrap percolation process on the complete graph on $A_r^*$ with initially infected $r$-graph $H^r$ runs for
\begin{align}\nonumber
T&{}= (2k-1)^{r-2}(8k^2-12k+6)-2\ge (2k-1)^r\ge \left(\frac{n}{2r}-\frac{3}{2}\right)^r\ge \left(\frac{n}{2r}\right)^r\left(1-\frac{3r}{n}\right)^r\\
\label{eq:ineq}&{}\ge \left(\frac{n}{2r}\right)^r\left(1-\frac{3}{2r}\right)^r\ge \left(\frac{n}{2r}\right)^r\left(1-\frac{3}{2\cdot 3}\right)^3=\frac18\left(\frac{n}{2r}\right)^r.
\end{align}
steps, since $n\ge 2r^2$ and $r\ge 3$. Thus, the lower bound of \cref{thm main} is witnessed by the $r$-graph obtained by adding $n-|A_r^*|\ge 0$ isolated vertices to $H^r$. The upper bound simply states that the  process cannot run more than $\binom{n}{r}\le n^r/r!\le (ne/r)^r$ steps.
\end{proof}

\noindent
In the sequel, we call a vertex \emph{odd} (resp.\ \emph{even}) if its subscript is odd (resp.\ even).

\begin{proof}[Proof of \cref{prop:induction}]
First of all, let us show that the process in $H\setminus \{e_0\}$ is stationary. Suppose for contradiction that this is not the case, and suppose that some $(r+1)$-tuple of vertices $f$ spans exactly $r$ edges. To begin with, $f$ cannot contain two non-consecutive vertices in $A_r$ since they are not contained in a common $r$-edge of $H$. Moreover, $f$ cannot be entirely contained in $A_{r-1}^*$ since otherwise $H_1\setminus \{e_0\}$ would not be stationary. Two cases remain:
\begin{itemize}
    \item Suppose that $|f\cap A_r| = 1$. Then the vertex in $f\cap A_r$ must be even (otherwise, we obtain a contradiction with the stationarity of $H_1\setminus \{e_0\}$). However, even vertices in $A_r$ are not contained in edges with $r-1$ other vertices in $A_{r-1}^*$, so this case is impossible.
    \item Suppose that $f\cap A_r = \{v_{i}^r, v_{i+1}^r\}$ for some $i\in[1,4k-4]$. We further assume that $i=4j$ is divisible by 4, the other cases being treated identically. Since the only edges containing $v_{4j}^r$ are those in $H_{4j}$, necessarily $f=\{v_{4j}^r,v_{4j+1}^r\}\cup e_0^-$. However, in this case the two edges $\{v_{4j}^r\}\cup e_0^-$ and $\{v_{4j+1}^r\}\cup e_0^-$ are both missing, so we obtain the desired contradiction.
\end{itemize}

Now, for every $j\in [1,2k-1]$ and every $i\in [0, T_1]$, denote $e_{i,j} = e_i^-\cup \{v_{2j-1}^r\}$. In particular, $e_{i,1} = e_i$. Further set
\[H_{\infty} = H\cup \{e_{i,j}\colon i\in [0, T_1], j\in [1,2k-1]\}\cup \{\{v_{4j-2}^r\}\cup e_{T_1}^-\colon j\in [1,k-1]\}\cup \{\{v_{4j}^r\}\cup e_0^-\colon j\in [1,k-1]\}.\]

We claim that $H_\infty$ is also stationary. As above, the $r+1$ tuple $f$ spanning exactly $r$ edges of $H_\infty$ falls in one of the following two cases.
\begin{itemize}
    \item Suppose that $|f\cap A_r|=1$. Then, the vertex in $f\cap A_r$ must be even as otherwise we obtain a contradiction with the fact that $H_1\cup\{e_{i,1}\colon i\in[0,T_1]\}$ is stationary (which is true by the definition of $T_1$). However, even vertices in $A_r$ are contained in only one edge with $r-1$ other vertices in $A_{r-1}^*$, so this case is impossible.
    \item Suppose that $f\cap A_r = \{v_{i}^r, v_{i+1}^r\}$ for some $i\in[1,4k-4]$. Again, we assume that $i=4j$. Since the only edges containing $v^r_{4j}$ are those in $H_{4j}\cup\{\{v_{4j}^r\}\cup e_0^-\}$, necessarily $f=\{v_{4j}^r,v_{4j+1}^r\}\cup e_0^-$. However, $f$ already spans a copy of $F_r$ in $H_\infty$, which leads to a contradiction. 
\end{itemize}
Moreover, since $H\subset H_\infty$, all edges infected by $H$ are contained in $H_\infty$.

From \cref{def:sequential} one may conclude that for every $i\in [0,T_1]$, 
the only copies of $F_r$ in $H_1\cup \{e_j\}_{j=1}^{T_1}$ that $e_i$ participates in
are spanned by $e_{i-1}\cup e_i$ (if $i>0$) and by $e_i\cup e_{i+1}$ (if $i<T_1$). Moreover, note that:
\begin{itemize}
    \item for every $j\in [1,2k-1]$, $v_{2j-1}^r$ is contained in a common edge of $H_{\infty}$ with $v_{2j-2}^r$ (if $j\ge 2$) and with $v_{2j}^r$ (if $j\le 2k-2$), but with no other vertex in $A_r$,
    \item no edge among $(e_{i,j})_{i=1}^{T_1-1}$ participates in a copy of $F_r$ in $H_{\infty}$ with any of $v_{2j-2}^r$ and $v_{2j}^r$,
    \item for every $j\ge 2$, $e_{0,j}$ spans a copy of $F_r$ in $H_{\infty}$ together with $v_{2j-2}^r$ and $e_1\setminus e_0$ (if $j$ is odd) or $e_{T_1-1}\setminus e_{T_1}$ (if $j$ is even) only; for every $j\le 2k-2$, $e_{T_1, j}$ spans a copy of $F_r$ in $H_{\infty}$ together with $v_{2j}^r$ and $e_{T_1-1}\setminus e_{T_1}$ (if $j$ is odd) or $e_1\setminus e_0$ (if $j$ is even) only.
\end{itemize}
Furthermore, for every odd (resp.\ even) $j\in [1,2k-2]$, $v_{2j}^r\cup e_{T_1}^-$ (resp.\ $v_{2j}^r\cup e_0^-$) spans a copy of $F_r$ in $H_{\infty}$ together with $v_{2j-1}^r$ and $v_{2j+1}^r$ only. We conclude that in the sequence of edges
\begin{equation}
\label{eq:list}
(e_{i,1})_{i=0}^{T_1}, v_2^{r}\cup e_{T_1}^-, (e_{T_1-i,2})_{i=0}^{T_1}, v_4^r\cup e_0^-, (e_{i,3})_{i=0}^{T_1},\dots, (e_{T_1-i,2k-2})_{i=0}^{T_1},v_{4k-4}^r\cup e_0^-, (e_{i,2k-1})_{i=0}^{T_1},\end{equation}
every edge except the first one and the last one participate in exactly two copies of $F_r$ in $H_{\infty}$, one spanned together with the edge preceding it and another one with the edge succeeding it. Moreover, the first and the last edges participate in exactly one copy of $F_r$ in $H_{\infty}$, spanned together with $e_{1,1}$ and with $e_{T_1-1, 2k-1}$, respectively. Recalling \cref{def:sequential}, this completes the proof of \cref{prop:induction} since \eqref{eq:list} is an enumeration of all $T+1$ edges in $H_\infty\setminus (H\setminus\{e_0\})$.
\end{proof}

\section{The base case---proof of Proposition~\ref{prop:induction:hypothesis} for $r=3$}
\label{sec:base}

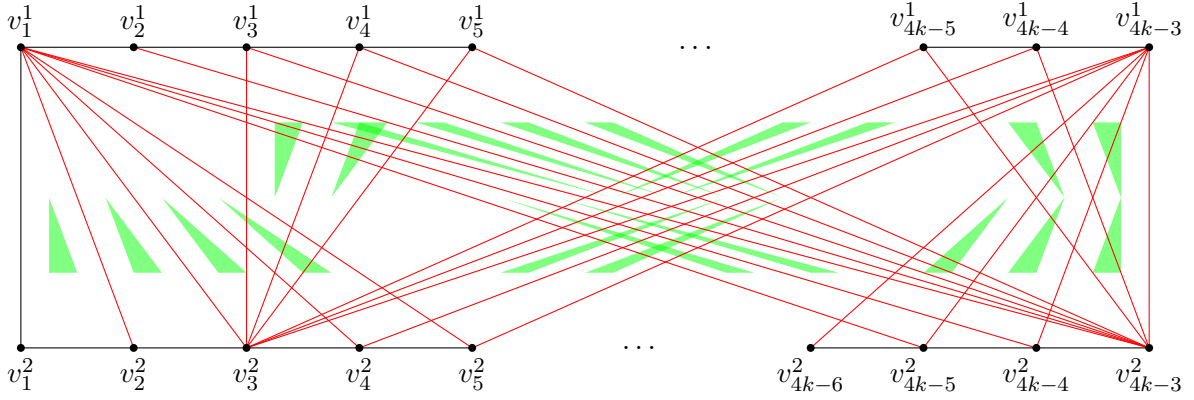
\begin{figure}
\begin{center}
\begin{tikzpicture}[line cap=round,line join=round,>=triangle 45,x=1.5cm,y=1cm]
\fill[color=green,fill=green,fill opacity=0.5] (0.5,1) -- (0.25,1) -- (0.25,2) -- cycle;
\fill[color=green,fill=green,fill opacity=0.5] (1.25,1) -- (0.75,2) -- (1,1) -- cycle;
\fill[color=green,fill=green,fill opacity=0.5] (1.25,2) -- (2,1) -- (1.75,1) -- cycle;
\fill[color=green,fill=green,fill opacity=0.5] (1.75,2) -- (2.75,1) -- (2.5,1) -- cycle;
\fill[color=green,fill=green,fill opacity=0.5] (4.25,2) -- (6.5,1) -- (6.25,1) -- cycle;
\fill[color=green,fill=green,fill opacity=0.5] (4.75,2) -- (7.25,1) -- (7,1) -- cycle;
\fill[color=green,fill=green,fill opacity=0.5] (2.75,3) -- (5.25,2) -- (3,3) -- cycle;
\fill[color=green,fill=green,fill opacity=0.5] (5.75,2) -- (3.75,3) -- (3.5,3) -- cycle;
\fill[color=green,fill=green,fill opacity=0.5] (6.25,2) -- (4.5,3) -- (4.25,3) -- cycle;
\fill[color=green,fill=green,fill opacity=0.5] (9.75,2) -- (9.5,3) -- (9.75,3) -- cycle;
\fill[color=green,fill=green,fill opacity=0.5] (9.25,2) -- (8.75,3) -- (9,3) -- cycle;
\fill[color=green,fill=green,fill opacity=0.5] (5,3) -- (6.75,2) -- (5.25,3) -- cycle;
\fill[color=green,fill=green,fill opacity=0.5] (9.75,2) -- (9.75,1) -- (9.5,1) -- cycle;
\fill[color=green,fill=green,fill opacity=0.5] (9.25,2) -- (8.75,1) -- (9,1) -- cycle;
\fill[color=green,fill=green,fill opacity=0.5] (6.25,2) -- (4.5,1) -- (4.25,1) -- cycle;
\fill[color=green,fill=green,fill opacity=0.5] (6.75,2) -- (5.25,1) -- (5,1) -- cycle;
\fill[color=green,fill=green,fill opacity=0.5] (8.75,2) -- (8,1) -- (8.25,1) -- cycle;
\fill[color=green,fill=green,fill opacity=0.5] (5.75,2) -- (7.5,3) -- (7.75,3) -- cycle;
\fill[color=green,fill=green,fill opacity=0.5] (5.25,2) -- (6.75,3) -- (7,3) -- cycle;
\fill[color=green,fill=green,fill opacity=0.5] (2.75,2) -- (3,3) -- (3.25,3) -- cycle;
\fill[color=green,fill=green,fill opacity=0.5] (2.25,2) -- (2.25,3) -- (2.5,3) -- cycle;
\draw (0,0)-- (0,4);
\draw[color=red] (0,4)-- (1,0);
\draw[color=red] (0,4)-- (2,0);
\draw[color=red] (0,4)-- (3,0);
\draw[color=red] (0,4)-- (4,0);
\draw[color=red] (0,4)-- (8,0);
\draw[color=red] (0,4)-- (9,0);
\draw[color=red] (0,4)-- (10,0);
\draw[color=red] (10,0)-- (1,4);
\draw[color=red] (10,0)-- (2,4);
\draw[color=red] (10,0)-- (3,4);
\draw[color=red] (4,4)-- (10,0);
\draw[color=red] (10,0)-- (8,4);
\draw[color=red] (9,4)-- (10,0);
\draw[color=red] (10,0)-- (10,4);
\draw[color=red] (10,4)-- (9,0);
\draw[color=red] (10,4)-- (8,0);
\draw[color=red] (10,4)-- (7,0);
\draw[color=red] (10,4)-- (4,0);
\draw[color=red] (10,4)-- (3,0);
\draw[color=red] (10,4)-- (2,0);
\draw[color=red] (2,0)-- (9,4);
\draw[color=red] (8,4)-- (2,0);
\draw[color=red] (2,0)-- (2,4);
\draw[color=red] (2,0)-- (3,4);
\draw[color=red] (2,0)-- (4,4);
\fill (0,0) circle (1.5pt) node[below]{$v_1^2$};
\fill (1,0) circle (1.5pt) node[below]{$v_{2}^2$};
\fill (2,0) circle (1.5pt) node[below]{$v_{3}^2$};
\fill (3,0) circle (1.5pt) node[below]{$v_{4}^2$};
\fill (4,0) circle (1.5pt) node[below]{$v_{5}^2$};
\fill (7,0) circle (1.5pt) node[below]{$v_{4k-6}^2$};
\fill (8,0) circle (1.5pt) node[below]{$v_{4k-5}^2$};
\fill (9,0) circle (1.5pt) node[below]{$v_{4k-4}^2$};
\fill (10,0) circle (1.5pt) node[below]{$v_{4k-3}^2$};
\fill (0,4) circle (1.5pt) node[above]{$v_1^1$};
\fill (1,4) circle (1.5pt) node[above]{$v_2^1$};
\fill (2,4) circle (1.5pt) node[above]{$v_3^1$};
\fill (3,4) circle (1.5pt) node[above]{$v_4^1$};
\fill (4,4) circle (1.5pt) node[above]{$v_5^1$};
\fill (8,4) circle (1.5pt) node[above]{$v_{4k-5}^1$};
\fill (9,4) circle (1.5pt) node[above]{$v_{4k-4}^1$};
\fill (10,4) circle (1.5pt) node[above]{$v_{4k-3}^1$};
\draw (0,0)--(4,0);
\draw (7,0)--(10,0);
\draw (5.5,0) node{\dots};
\draw (0,4)--(4,4);
\draw (6,4) node{\dots};
\draw (8,4)--(10,4);
\end{tikzpicture}
\end{center}
\caption{\label{fig:bipartite} The first stage of the bootstrap percolation process in $H_1$, where all 2-edges depicted are parts of 3-edges including $v_1^3$. The edges in $\cF$ (defined in \eqref{eq:def:F:odd}) and the ignition edge $e_0$ are represented by black 2-edges. They are infected initially, while red edges are created one by one by the process. Green triangles represent the 3-edges in $\cE_1$ defined in \eqref{eq E-s}); they are all infected initially as well. As in Fig.~\ref{fig:3level}, we draw a 3-edge as a smaller homothetic copy of the corresponding triangle.}
\end{figure}

In order to prove \cref{prop:induction:hypothesis} for $r=3$, our goal is to construct a sequential 3-graph $H_1$ with vertices $A_2^*\cup \{v_1^3\}$ and running time $8k^2-12k+4$. For all $i\in [1,k-1]$, define the disjoint edge sets
\begin{equation}
\begin{split}\label{eq E-s}
&\cE_{1,i} = \{v_{2i-1}^1 v_j^2 v_{j+1}^2: j\in [2i-1, 4k-2-2i]\},\\
&\cE_{2,i} = \{v_j^1 v_{j+1}^1 v_{4k-1-2i}^2: j\in [2i-1, 4k-2-2i]\},\\
&\cE_{3,i} = \{v_{4k-1-2i}^1 v_j^2 v_{j+1}^2: j\in [2i+1, 4k-2-2i]\},\\
&\cE_{4,i} = \{v_j^1 v_{j+1}^1 v_{2i+1}^2: j\in [2i+1, 4k-2-2i]\},\\
&\cE_i = \cE_{1,i}\cup \cE_{2,i}\cup \cE_{3,i}\cup \cE_{4,i}
\end{split}
\end{equation}
(see \cref{fig:bipartite} for an illustration of $\cE_1$). Also, denote $\cE = \bigcup_{i=1}^{k-1} \cE_i$. This is the set of edges with vertices in $A_2^*$ that we infect at the beginning.

In addition to $\cE$, some edges containing $v_1^3$ are also infected initially (see \cref{fig:bipartite}). Denote
\begin{equation}
\label{eq:def:F:odd}
\cF = \bigcup_{j=1}^{4k-4}\bigcup_{l=1}^2\{v_j^l v_{j+1}^l v_1^3\}.
\end{equation}
In other words, the set $\cF$ consists of all triplets containing the vertex $v_1^3$ and a consecutive pair of vertices in one of the paths $v^1_1 v^1_2 \dots v^1_{4k-3}$ and $v^2_1 v^2_2 \dots v^2_{4k-3}$. Finally, we need the \emph{ignition edge} $e_0 = v_1^1 v^2_1 v^3_1$ to start the growth process.

\begin{lemma}
\label{lem:base}
The set $\cE\cup\cF$ contains no $3$ edges with a total of $4$ vertices.
\end{lemma}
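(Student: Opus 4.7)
My plan is to fix an arbitrary $4$-vertex set $S\subseteq V(H_1)=A_1\cup A_2\cup\{v_1^3\}$ and show that at most two of its four $3$-subsets belong to $\cE\cup\cF$. The argument will lean on two structural observations that I would isolate at the outset: every edge of $\cF$ contains $v_1^3$ while no edge of $\cE$ does, and inspection of the families $\cE_{1,i},\dots,\cE_{4,i}$ in~\eqref{eq E-s} shows that each edge of $\cE$ is formed by two consecutive vertices on one of the two paths together with a third vertex on the other path whose index is \emph{odd}.

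Suppose first that $v_1^3\in S$, and write $S=\{v_1^3,x_1,x_2,x_3\}$. Any triple $\{v_1^3,x_i,x_j\}$ in $\cE\cup\cF$ must lie in $\cF$, which requires $\{x_i,x_j\}$ to be a consecutive pair on one of the paths. The three pairs on $\{x_1,x_2,x_3\}$ cannot all be consecutive pairs of a path, as this would produce a triangle there; so at most two such triples lie in $\cF$. When exactly two do, the two witnessing consecutive pairs share a vertex and pin $x_1,x_2,x_3$ to a common path, so the remaining triple $\{x_1,x_2,x_3\}$ has three vertices on a single side and cannot be in $\cE$. Hence at most two edges sit on $S$.

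For the case $v_1^3\notin S$, all four vertices of $S$ lie in $A_1\cup A_2$ and every candidate edge is in $\cE$. The 4--0 and 3--1 splits between the two paths yield at most three triples of the "two in one path, one in the other" shape, and the no-triangle-in-a-path principle caps the number of $\cE$-edges at two. The delicate case is the 2--2 split $S=\{a_1,a_2,b_1,b_2\}$ with $a_i\in A_1$ and $b_j\in A_2$: the four triples split into two of type $\{a_1,a_2,b_j\}$ (in $\cE$ only if $(a_1,a_2)$ is consecutive in path~$1$ and $b_j$ has an odd index) and two of type $\{a_i,b_1,b_2\}$ (with the symmetric requirements). Here I would invoke the parity observation: a consecutive pair of a path has exactly one odd-indexed vertex. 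For example, if both $\{a_1,a_2,b_1\}$ and $\{a_1,a_2,b_2\}$ lay in $\cE$, then $b_1$ and $b_2$ would both be odd-indexed, hence non-consecutive in path~$2$, which would exclude both $\{a_i,b_1,b_2\}$ from $\cE$. The remaining ways to pick three of the four triples are ruled out by the symmetric version of the same obstruction.

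The main obstacle is the 2--2 split, where the odd-index property of the singleton vertex in every $\cE$-edge is the only lever preventing three of the four triples from simultaneously lying in $\cE$; the remaining sub-cases reduce to the simple fact that a path contains no triangle.
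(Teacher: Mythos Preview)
Your proof is correct and rests on the same structural fact as the paper's: every edge of $\cE$ consists of a consecutive pair on one path together with an odd-indexed vertex on the other (equivalently, every edge of $\cE$ has exactly two odd-indexed vertices and one even-indexed vertex). The case $v_1^3\in S$ is handled identically in both proofs.

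The only difference is organisational. For the case $v_1^3\notin S$, the paper splits according to the number of odd-indexed vertices in $S$ (the 2-odd case gives at most two edges trivially; the 3-odd case is handled by pigeonholing two odd vertices into the same $A_i$, where they cannot share an $\cE$-edge), whereas you split according to $(|S\cap A_1|,|S\cap A_2|)$ and treat the $2$--$2$ split explicitly via the parity obstruction on the singleton. Both decompositions are short and rely on the same parity lever; neither gains anything substantial over the other.
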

\begin{proof}
Let us first consider $4$-tuples of vertices in $A_2^*$. Note that all edges in $\cE$ contain two odd vertices and one even vertex. Hence, a set of 4 odd vertices in $A_2^*$ spans no edges, and a set of $4$ vertices in $A_2^*$ with at most $2$ of them odd spans at most $2$ edges. Finally, a set of $4$ vertices in $A_2^*$ with $3$ of them odd cannot span 3 edges since at least two of the odd vertices are either both in $A_1$, or both in $A_2$, and hence cannot participate in the same edge as they are not consecutive.

Now, consider $4$-tuples containing the vertex $v_1^3$. On the one hand, $A_1$ and $A_2$ are independent sets in the 3-graph $\cE$, and moreover the graph with edges $\{e\setminus v_1^3\colon v_1^3\in e\in\cF\}$ contains no triangles. Thus, every $4$-tuple containing $v_1^3$ and three vertices in $A_1$ (respectively in $A_2$) spans no edges of $\cE$ and at most 2 edges of $\cF$. It remains to observe that every 4-tuple containing $v_1^3$ and at least one vertex from both $A_1$ and $A_2$ cannot span more than 1 edge of $\cF$ (so 2 edges in total) since $\cF$ contains no edges intersecting both $A_1$ and $A_2$.
\end{proof}

We are now ready to conclude the proof of \cref{prop:induction:hypothesis} for $r=3$ by showing the following statement, illustrated in \cref{fig:bipartite} for $s=1$ and in \cref{fig:3level} for $s=k-1$. The proof is a straightforward but tedious verification.
\begin{lemma}\label{lem:H_1}
The $3$-graph $H_1=\cE\cup \cF\cup \{e_0\}$ is $(e_i)_{i=0}^{T_1}$-sequential for
\[\sum_{j=1}^{s-1} (16(k-j) - 4) < i\le \sum_{j=1}^s (16(k-j) - 4)\]
(which clearly exists since $T_1 = \sum_{j=1}^{k-1} (16(k-j) - 4)$).
Moreover, define 
\[\alpha = \alpha(i) = i - \sum_{j=1}^{s-1} (16(k-j) - 4)\in [1, 16(k-s)-4].\]
Then,
\begin{itemize}
    \item if $\alpha\in [1,4(k-s)]$, then $e_i = v_{2s-1}^1 v_{2s-1+\alpha}^2 v_1^3$;
    \item if $\alpha\in [4(k-s)+1, 8(k-s)]$, then $e_i = v_{6s-4k-1+\alpha}^1 v_{4k-2s-1}^2 v_1^3$;
    \item if $\alpha\in [8(k-s)+1, 12(k-s)-2]$, then $e_i = v_{4k-2s-1}^1 v_{12k-10s-1 - \alpha}^2 v_1^3$;
    \item if $\alpha\in [12(k-s)-1, 16(k-s)-4]$, then $e_i = v_{16k-14s-3-\alpha}^1 v_{2s+1}^2 v_1^3$.
\end{itemize}
\end{lemma}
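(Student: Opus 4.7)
The plan is to verify the three properties in \cref{def:sequential} for $H_1=\cE\cup\cF\cup\{e_0\}$ and the explicit sequence $(e_i)_{i=0}^{T_1}$ stated in the lemma. Property~(ii)---stationarity of $H_1\setminus\{e_0\}=\cE\cup\cF$---is immediate from \cref{lem:base}, since no 4-tuple spans three edges of $\cE\cup\cF$, so no copy of $F_3$ can be completed.

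For property~(i), I would argue by induction on $i$ with the invariant that the infected set after step $i$ equals $\cE\cup\cF\cup\{e_0,\ldots,e_i\}$. The inductive step splits into (a)~exhibiting a copy of $F_3$ whose unique missing edge is $e_i$, and (b)~ruling out every other copy of $F_3$ missing a single edge. For~(a), in the first sub-stage of stage $s$ (the range $\alpha\in[1,4(k-s)]$), I would take the 4-tuple $\{v_1^3, v_{2s-1}^1, v_{2s-2+\alpha}^2, v_{2s-1+\alpha}^2\}$: it contains $e_{i-1}$ (which equals the last edge of stage $s-1$ when $\alpha=1$ and $s\ge 2$, and $e_0$ when $\alpha=s=1$), the $\cF$-edge $v_1^3 v_{2s-2+\alpha}^2 v_{2s-1+\alpha}^2$, and the $\cE_{1,s}$-edge $v_{2s-1}^1 v_{2s-2+\alpha}^2 v_{2s-1+\alpha}^2$, the remaining triple being $e_i$; the index constraint $2s-2+\alpha\in[2s-1,4k-2-2s]$ demanded by~\eqref{eq E-s} translates to exactly $\alpha\in[1,4(k-s)]$. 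Entirely analogous 4-tuples involving $\cE_{2,s}, \cE_{3,s}, \cE_{4,s}$ would handle the other three sub-stages of stage $s$, and a similar 4-tuple with its fourth vertex on the opposite path would bridge each transition between consecutive sub-stages.

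For part~(b), I would first observe that by \cref{lem:base} any new copy of $F_3$ must involve some previously created edge $e_j$ with $j<i$; since all such edges have the form $v_a^1 v_b^2 v_1^3$, the new edge must have the same form and the fourth vertex of the witnessing 4-tuple must lie in $A_1\cup A_2$. A case distinction on whether this fourth vertex is in $A_1$ or $A_2$, combined with the facts that $\cF$ joins only consecutive path-vertices with $v_1^3$ and that the $\cE_{l,s}$ are supported on the boundaries of nested rectangles with disjoint supports across stages, should leave exactly the 4-tuple from~(a). This is where I expect the main obstacle to lie: careful bookkeeping of which edges $e_j$ are available at step $i$ is needed in order to exclude $F_3$-copies that would cross a stage boundary or use an interior chord of the current rectangle.

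Finally, property~(iii) would follow by a time-reversal argument: every $F_3$-copy that creates $e_i$ from $e_{i-1}$ in the forward process equally creates $e_{i-1}$ from $e_i$ in the reverse process started from $\cE\cup\cF\cup\{e_{T_1}\}$, and the uniqueness analysis of~(b) carries over with past and future swapped, yielding the reverse infection sequence $e_{T_1-1},e_{T_1-2},\ldots,e_0$.
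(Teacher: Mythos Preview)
Your plan follows essentially the same route as the paper: property~(ii) from \cref{lem:base}, an induction on $i$ for property~(i), and a time-reversal for property~(iii). Two points in part~(b), however, deserve sharpening before the case analysis becomes tractable.

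First, the paper uses a stronger observation than ``some $e_j$ with $j<i$ is involved'': any $4$-tuple producing a new edge at step $i$ must contain the \emph{most recently} infected edge $e_{i-1}$, since a $4$-tuple with three edges already in $G_{i-2}$ would have been completed at step $i-1$. This collapses your anticipated bookkeeping to a single question: for which vertices $w$ does $e_{i-1}\cup\{w\}$ span at least three edges of $G_{i-1}$? The paper answers this by listing, for each of the three $2$-subsets of $e_{i-1}$, all $w$ completing that pair to an edge of $G_{i-1}$, and then intersecting; only one or two candidates survive, and all but one yield a $4$-tuple already fully infected. Second, your assertion that ``the new edge must have the same form $v_a^1 v_b^2 v_1^3$'' does not follow from the shape of $e_j$ alone: with $e_{i-1}=\{v_a^1,v_b^2,v_1^3\}$ and $w\in A_1\cup A_2$, the missing edge could a priori be $\{v_a^1,v_b^2,w\}\subset A_2^*$. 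The paper does not rule this out in advance; it simply emerges from the pair-by-pair enumeration that for every surviving $w$ either that triple is already in $\cE$ (so the $4$-tuple is complete) or the unique missing edge is $e_i$.
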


Note that the sequence of $3$-edges indicated in Lemma~\ref{lem:H_1} may be alternatively described in the following more intuitive way (see Fig.~\ref{fig:bipartite}). At the beginning, the process first infects the edges $v_1^1v_j^2v_1^3$ for $j \in [2,4k-3]$, then $v_j^1v_{4k-3}^2v_1^3$ for $j\in [2,4k-3]$, then $v_{4k-3}^1v_j^2v_1^3$ for $j\in [3,4k-4]$ in reverse order, then $v_j^1v_3^2v_1^3$ for $j \in [3,4k-4]$ in reverse order.
At this point, $s$ increases from 1 to 2.
The process continues to propagate in a similar way except that, every time $s$ increases, the ``ends'' shift two vertices towards the middle (so $v_3^1$ plays the role of $v_1^1$ when $s$ is incremented for the first time).

\begin{proof}
To begin with, the $F_3$-bootstrap percolation process on the complete $3$-graph with vertices $A_2^*\cup\{v_1^3\}$ and initially infected $3$-graph $H_1\setminus \{e_0\} = \cE\cup \cF$ is stationary by \cref{lem:base}. Now, set $G_{-1} = \cE\cup \cF$, $G_0 = H_1$ and let the sequence $(G_i)_{i\ge 1}$ be defined as in~\eqref{eq:G_i}. We will show by induction on $i\ge 0$ that $G_i\setminus G_{i-1} = \{e_i\}$. 

For $i = 0$ the statement is clear by construction. Suppose that for some $i\ge 0$, the induction hypothesis is satisfied for $i$. Then, all edges in $G_{i+1}\setminus G_i$ must span 4 vertices with the edge $e_i$, and in particular must intersect $e_i$ in 2 vertices. We consider 4 cases according to the interval containing $\alpha =\alpha(i)$. For convenience, we extend the notation by setting $s(0)=1$ and $\alpha(0)=0$ (so that $\alpha(i)=0$ only when $s(i)=1$), which is consistent with $e_0=v^1_{2s-1}v^2_{2s-1+\alpha}v_1^3$).

If $\alpha\in [0,4(k-s)]$, then $e_i = v_{2s-1}^1 v_{2s-1+\alpha}^2 v_1^3$. One may easily check that the vertices in $A_2^*$ that complete the pair $v_{2s-1}^1 v_{2s-1+\alpha}^2$ to an edge in $G_{i-1}$ are 
\[\begin{cases}
v_{2s+\alpha}^2&\alpha = 0,\\
v_{2s-2+\alpha}^2,v_{2s+\alpha}^2&\alpha \in [1,4(k-s)-1],\\
v_{2s-2+\alpha}^2,v_{2s}^1&\alpha = 4(k-s).
\end{cases}\]
The vertices that complete the pair $v_{2s-1}^1 v_1^3$ to an edge in $G_{i-1}$ are 
\[\begin{cases}(v_{2s-1+j}^2)_{j\in [0, \alpha-1]}, v_{2s}^1&s=1,\\(v_{2s-1+j}^2)_{j\in [0, \alpha-1]}, v_{2s}^1,v^1_{2s-2}, (v_{2j-1}^2)_{j\in [2,s-1]\cup [2k-s+1, 2k-1]}
&s\ge 2.
\end{cases}\]
Finally, the vertices that complete the pair $v_{2s-1+\alpha}^2 v_1^3$ to an edge in $G_{i-1}$ are
\[\begin{cases}
v_{2s+\alpha}^2&\alpha=0,\\
(v_{2j-1}^1)_{j\in [1,s-1]\cup [2k-s+1,2k-1]},v^2_{2s-2+\alpha},v_{2s+\alpha}^2&\alpha\in[1,4k-5],\\
v^2_{2s-2+\alpha}&\alpha=4k-4.
\end{cases}\]

Then, the only vertices that complete at least 
two pairs are 
\[\begin{cases}v^2_{2}&\alpha=0,\\
v^2_{2s+\alpha},v^2_{2s+\alpha}&\alpha\in[1,4(k-s)-1],\\
v^2_{2s+\alpha},v^1_{2s}&\alpha=4(k-s).\end{cases}\]
However, for $\alpha\neq 0$, $\{v^2_{2s-2+\alpha}\}\cup e_i$ already spans a copy of $F_3$ in $G_i$, so the only edge that is generated from $G_i$ is $e_{i+1}$.

If $\alpha\in [4(k-s)+1, 8(k-s)]$, then $e_i = v_{6s-4k-1+\alpha}^1 v_{4k-2s-1}^2 v_1^3$. One may easily check that the vertices in $A_2^*$ that complete the pair $v_{6s-4k-1+\alpha}^1 v_{4k-2s-1}^2$ to an edge in $G_{i-1}$ are 
\[\begin{cases}v_{6s-4k-2+\alpha}^1,v_{6s-4k+\alpha}^1&\alpha\neq 8(k-s),\\
v_{4k-2s-2}^1,v_{4k-2s-2}^2&\alpha=8(k-s).\end{cases}\]
The vertices that complete the pair $v_{6s-4k-1+\alpha}^1 v_1^3$ to an edge in $G_{i-1}$ are
\[\begin{cases}v^1_{4k-4}&\alpha=8k-8,\\
(v_{2j-1}^2)_{j\in [2,s-1]\cup [2k-s+1,2k-1]}, v_{6s-4k-2+\alpha}^1,v_{6s-4k+\alpha}^1&\text{otherwise.}\end{cases}\]

Finally, the vertices that complete the pair $v_{4k-2s-1}^2 v_1^3$ to an edge in $G_{i-1}$ are 
\[\begin{cases}
(v_{5-4k+j}^1)_{j\in [4(k-1), \alpha-1]}, v_{4k-4}^2,&s=1,
\\(v_{6s-4k-1+j}^1)_{j\in [4(k-s), \alpha-1]}, (v_{2j-1}^{1})_{j\in [1,s-1
]\cup [2k-s+1, 2k-1]}, v_{4k-2s-2}^2,v_{4k-2s}^2&s\neq 1.
\end{cases}\]
Then, the only vertices that complete at least two pairs are
\[\begin{cases}
v^1_{4k-2s-2},v^2_{4k-2s-2}&\alpha=8(k-s),\\
v^1_{6s-4k-2+\alpha},v^1_{6s-4k+\alpha}&\alpha\neq 8(k-s).
\end{cases}\]
However, $\{v_{6s-4k-2+\alpha}^1\} \cup e_i$ already spans a copy of $F_3$ in $G_i$, so the only edge that is generated from $G_i$ is $e_{i+1}$.

If $\alpha\in [8(k-s)+1, 12(k-s)-2]$, then $e_i = v_{4k-2s-1}^1 v_{12k-10s-1-\alpha}^2 v_1^3$. One may easily check that the vertices in $A_2^*$ that complete the pair $v_{4k-2s-1}^1 v_{12k-10s-1-\alpha}^2$ to an edge in $G_{i-1}$ are 
\[\begin{cases}
v_{4k-2s-2}^1,v_{2s+2}^2&\alpha = 12(k-s)-2,\\
v_{12k-10s-2-\alpha}^2,v_{12k-10s-\alpha}^2&\alpha\neq 12(k-s)-2.
\end{cases}\]
The vertices that complete the pair $v_{4k-2s-1}^1 v_1^3$ to an edge in $G_{i-1}$ are 
\[\begin{cases}
(v_{12k-11-j}^{2})_{j\in [8k-8, \alpha-1]}, v_{4k-3}^2, v_{4k-4}^1,&s=1,\\
(v_{12k-10s-1-j}^{2})_{j\in [8(k-s), \alpha-1]}, (v_{2j-1}^2)_{j\in [2,s]\cup [2k-s,2k-1]}, v_{4k-2s-2}^1, v_{4k-2s}^1&s\neq 1.\end{cases}\]
The vertices that complete the pair $v_{12k-10s-1-\alpha}^2 v_1^3$ to an edge in $G_{i-1}$ are 
\[(v_{2j-1}^1)_{j\in [1,s]\cup [2k-s+1, 2k-1]}, v_{12k-10s-2-\alpha}^2, v_{12k-10s-\alpha}^2.\]
Then, the only vertices that complete at least two pairs are \[\begin{cases}
v_{4k-2s-2}^1,v_{2s+2}^2&\alpha=12(k-s)-2\\
v_{12k-10s-2-\alpha}^2,v_{12k-10s-\alpha}^2&\alpha\neq 12(k-s)-2.
\end{cases}\]
However, $\{v_{12k-10s-\alpha}^2\} \cup e_i$ already spans a copy of $F_3$ in $G_i$, so the only edge that is generated from $G_i$ is $e_{i+1}$.

If $\alpha\in [12(k-s)-1, 16(k-s)-4]$, then $e_i = v_{16k-14s-3-\alpha}^1 v_{2s+1}^2 v_1^3$. One may easily check that the vertices in $A_2^*$ that complete the pair $v_{16k-14s-3-\alpha}^1 v_{2s+1}^2$ to an edge in $G_{i-1}$ are
\[\begin{cases}
v^1_{2k}&\alpha=16(k-s)-4,s=k-1,\\
v_{2s+2}^1,v_{2s+2}^2&\alpha = 16(k-s)-4,s\neq k-1\\
v_{16k-14s-4-\alpha}^1,v_{16k-14s-2-\alpha}^1&\alpha\neq16(k-s)-4.
\end{cases}\]
The vertices that complete the pair $v_{16k-14s-3-\alpha}^1 v_1^3$ to an edge in $G_{i-1}$ are
\[(v_{2j-1}^2)_{j\in [2,s]\cup [2k-s, 2k-1]}, v_{16k-14s-4-\alpha}^1,v_{16k-14s-2-\alpha}^1.\]
Finally, the vertices that complete the pair $v_{2s+1}^2 v_1^3$ to an edge in $G_{i-1}$ are 
\[(v_{16k-14s-3-j}^1)_{j\in [12(k-s)-2, \alpha-1]}, (v_{2j-1}^1)_{j\in [1,s]\cup [2k-s+1,2k-1]}, v_{2s}^2, v_{2s+2}^2.\]
Then, the only vertices that complete at least two pairs are 
\[\begin{cases}
v^1_{2k}&\alpha=16(k-s)-4,s=k-1,\\
v_{2s+2}^1,v_{2s+2}^2&\alpha=16(k-s)-4,s\neq k-1,\\
v_{16k-14s-4-\alpha}^1,v_{16k-14s-2-\alpha}^1&\alpha\neq 16(k-s)-4.
\end{cases}\]
However, $\{v_{16k-14s-2-\alpha}^1\} \cup e_i$ already spans a copy of $F_3$ in $G_i$, so the only edge that is generated from $G_i$ is $e_{i+1}$, and the process stops when it infects $e_{T_1}$.

It remains to show that the $F_3$-bootstrap percolation process on the complete $3$-graph with vertices $A_2^*\cup\{v_1^3\}$ and initially infected $3$-graph $\cE\cup\cF\cup\{e_{T_1}\}$ infects the edges $(e_i)_{i=0}^{T_1-1}$ in reverse order. Notice that in the process started from $G_0$, every copy of $F_3$ that becomes completely infected contains two edges in $\cE\cup\cF$ and the edges $e_i$ and $e_{i+1}$ for some $i\ge 0$. Therefore, starting from $\cE\cup\cF\cup\{e_{T_1}\}$, the edges that do become infected are exactly $(e_i)_{i=0}^{T_1}$. Moreover, all copies of $F_3$ in $G_{T_1}$ are spanned by some of the vertex sets $(e_{i-1}\cup e_i)_{i=1}^{T_1}$, which implies that the bootstrap percolation with initial condition $\cE\cup\cF\cup\{e_{T_1}\}$ infects the edges $(e_i)_{i=0}^{T_1-1}$ in the reverse order.
\end{proof}

\paragraph{Acknowledgements.}
The authors are grateful to the `Minu Balkanski' foundation, where the current project was initiated, and to the anonymous referees for useful comments and remarks. IH was supported by the Austria Science Fund (FWF): P35428-N.

\paragraph{Note.}
A few days after the current article appeared on arxiv, an independent proof of a version of \cref{thm main} was found by Espuny D\'iaz, Janzer, Kronenberg and Lada~\cite{EspunyDiaz24}.

\bibliographystyle{plain}
\bibliography{Bib}

\end{document}